\pgfplotsset{compat=1.18}
\newcommand{\makeheading}[1]%
{\hspace*{-\marginparsep minus \marginparwidth}%
	\begin{minipage}[t]{\textwidth}%
		{\large \bfseries #1}\\[-0.15\baselineskip]%
		\rule{\columnwidth}{1pt}%
\end{minipage}}
\newcommand{\MLS}{\text{MLS}}
\newcommand{\im}{\text{Im}}
\newcommand{\R}{\mathbb{R}}
\newcommand{\F}{\mathcal{F}}
\newcommand{\Vol}{\text{Vol}}
\newcommand{\supp}{\text{supp}}
\theoremstyle{plain}
\newtheorem{theorem}{Theorem}
\newtheorem{claim}[theorem]{Claim}
\newtheorem{lemma}[theorem]{Lemma}
\theoremstyle{definition}
\newtheorem{rem}[theorem]{Remark}
\renewcommand{\phi}{\varphi}
\renewcommand{\epsilon}{\varepsilon}
\def\semicolon{;}
\def\applytolist#1{
	\expandafter\def\csname multi#1\endcsname##1{
		\def\multiack{##1}\ifx\multiack\semicolon
		\def\next{\relax}
		\else
		\csname #1\endcsname{##1}
		\def\next{\csname multi#1\endcsname}
		\fi
		\next}
	\csname multi#1\endcsname}
\def\calc#1{\expandafter\def\csname c#1\endcsname{{\mathcal #1}}}
\def\bbc#1{\expandafter\def\csname bb#1\endcsname{{\mathbb #1}}}
\def\bfc#1{\expandafter\def\csname bf#1\endcsname{{\mathbf #1}}}
\def\sfc#1{\expandafter\def\csname s#1\endcsname{{\sf #1}}}
\def\fc#1{\expandafter\def\csname f#1\endcsname{{\mathfrak #1}}}
\newcommand{\short}{\text{short}}
\title{A counterexample to marked length spectrum semi-rigidity}
\author{Andrey Gogolev and James Marshall Reber}
\thanks{The authors were partially supported by NSF DMS-2247747}
\begin{document}
	
	\begin{abstract}
		Given a closed, orientable, negatively curved Riemannian surface $(M,g)$, we show how to construct a perturbation $(M,g^\prime)$ such that each closed geodesic becomes longer, and yet there is no diffeomorphism ${f \colon (M,g^\prime)\to (M,g)}$ which contracts every tangent vector.
	\end{abstract}
	\maketitle
	\section{Introduction} \label{sec:introduction}

	Let $M$ be a closed, connected manifold, and let $g$ be a Riemannian metric on $M$ with everywhere negative sectional curvature. It is well known that inside of every non-trivial free homotopy $\sigma$ there exists a unique closed geodesic $\gamma_\sigma$ \cite[Chapter 12]{doCarmo}.
	Denoting the length of a curve $\gamma$ with respect to the metric $g$ by $\ell_g(\gamma)$, the \emph{marked length spectrum} $\MLS_g$ is the function which takes a free homotopy class $\sigma$ and returns the length of the unique closed geodesic $\gamma_\sigma$ in the free homotopy class.
	
	The marked length spectrum has attracted a lot of attention due to the \emph{marked length spectrum rigidity conjecture}, which states that if $\MLS_g( \sigma) = \MLS_{g'}(\sigma)$ for all free homotopy classes $\sigma$, then there is a diffeomorphism $f : M \rightarrow M$ such that $f^*(g') = g$ \cite[Conjecture 3.1]{BK}. The conjecture is known to hold in dimension two \cite{croke, otal}, and in dimensions three and higher when the manifold is locally symmetric \cite{BCG, hamenstadt}, when the metrics are sufficiently close in a fine topology \cite{gl}, or when the metrics are conformally equivalent \cite[Theorem 2]{katok}. The conjecture is still open in general.
	
	Variations of the marked length spectrum rigidity conjecture have been considered in recent years. For example, Butt considered in \cite{butt} an ``approximate version'' of marked length spectrum rigidity, where the marked length spectra being ``approximately equal'' implies that the metrics are ``approximately isometric.'' Another example can be found in \cite{CDS}, where it is conjectured that the inequality $\MLS_g \leq \MLS_{g'}$ implies $\Vol(g) \leq \Vol(g')$, with $\Vol(g) = \Vol(g')$ holding if and only if $g$ is isometric to $g'$.
	Similar to the marked length spectrum conjecture, this is known to be true if $M$ is a surface \cite[Theorem 1.1]{CD}, and in dimensions three and higher if the metrics are sufficiently close in a fine topology \cite{gl} or if the metrics are conformally equivalent \cite[Theorem 1.2]{CD}. Both of these variations can be seen as loosening the equality assumption in the marked length spectrum conjecture and asking what geometric information remains.
	
	In the same vein as above, we wish to explore two marked length spectrum ``semi-rigidity'' type problems, i.e., problems which relate an inequality on the marked length spectrum of two metrics to the existence of a diffeomorphism on $M$ carrying some geometric information. For the first problem, we consider whether or not an inequality on the marked length spectrum implies the existence of a \emph{volume shrinking diffeomorphism} between $g$ and $g'$, i.e., a diffeomorphism whose Jacobian is bounded above by one. In the cases where it is known that $\MLS_g \leq \MLS_{g'}$ implies $\Vol(g) \leq \Vol(g')$, it can be deduced using \cite[Theorem 1.1]{CD} and Moser's homotopy trick \cite{Moser} that if $g$ and $g'$ are two negatively curved metrics on $M$ with $\MLS_g \leq \MLS_{g'}$, then there exists a volume shrinking diffeomorphism $f : M \rightarrow M$. 
	
	For the second problem, we consider whether or not an inequality on the marked length spectrum implies the existence of a length \emph{shrinking diffeomorphism} between $g$ and $g'$, that is, a diffeomorphism $f : M \rightarrow M$ such that $\|D_xf(v)\| \leq \|v\|'$ for all $(x,v) \in TM.$
	Somewhat surprisingly, our main result shows that the answer is ``no'' in a rather strong sense.
	To help with notation, we denote a shrinking diffeomorphism by $f : (M,g') \rightarrow (M,g)$ to help indicate which metric is ``shrinking'' under the diffeomorphism.
	
	\begin{theorem} \label{thm:perturb}
		Let $M$ be a closed, connected, orientable surface, and let $g$ be a negatively curved metric on $M$. Then $g$ admits arbitrarily $C^\infty$-small perturbations $g'$ for which there exists $\epsilon > 0$ so that we have \hbox{$\MLS_{g'} > (1+\epsilon)\MLS_g$} and there does not exist a shrinking diffeomorphism ${f : (M,g') \rightarrow (M,g)}$.
	\end{theorem}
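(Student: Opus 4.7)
The plan is infinitesimal. Write $g' = g + th$ for small $t > 0$ and a symmetric $2$-tensor $h$ on $(M,g)$ to be constructed. Two conditions on $h$ will suffice: (i) $\int_{\gamma_\sigma} h(\dot\gamma, \dot\gamma)\,ds \geq c\,\ell_g(\gamma_\sigma)$ for every free homotopy class $\sigma$ with a uniform $c > 0$; and (ii) no smooth vector field $Y$ on $M$ satisfies $\L_Y g \leq h$ pointwise as symmetric $2$-tensors. Condition (i) yields $\MLS_{g'} > (1 + \epsilon)\MLS_g$ with $\epsilon$ of order $ct$ for small $t$, via the standard first-variation formula $\MLS_{g + th}(\sigma) = \MLS_g(\sigma) + \tfrac{t}{2}\int_{\gamma_\sigma} h(\dot\gamma, \dot\gamma)\,ds + O(t^2)$.

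To reduce no-shrinking to (ii), I would argue by contradiction: suppose shrinking diffeomorphisms $f_t \colon (M, g + th) \to (M, g)$ exist for a sequence $t \to 0$. The shrinking condition $f_t^* g \leq g + th$ keeps $\|Df_t\|$ uniformly bounded, so by Arzel\`a--Ascoli a subsequential $C^0$-limit $f_0$ exists with $f_0^* g \leq g$; the identity $\Vol(f_0^* g) = \Vol(g)$ then forces $f_0^* g = g$, so $f_0$ is an isometry of $g$. Since the isometry group of a negatively curved orientable surface of genus $\geq 2$ is finite, after composing with an isometry we may assume $f_t \to \id$. Writing $f_t = \exp(u_t)$, the shrinking inequality expands to $\L_{u_t} g + O(\|u_t\|_{C^1}^2) \leq th$; ellipticity of the symmetrized-derivative operator (together with the absence of nontrivial Killing fields) bounds $\|u_t\|_{C^1} = O(t)$, so $Y_t := u_t/t$ is $C^1$-bounded and a subsequential limit $Y$ satisfies $\L_Y g \leq h$, contradicting (ii).

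For the construction, I would decompose $h = \phi g + h_0$ into trace and traceless parts. The complex structure induced by $g$ makes $M$ a Riemann surface of genus $\geq 2$, and $Y \mapsto (\L_Y g)_{\text{traceless}}$ is essentially a $\bar\partial$-operator on vector fields, whose $L^2$-cokernel is canonically the $(6g-6)$-real-dimensional space of real parts of holomorphic quadratic differentials. For any candidate $Y$, the residual $h_0 - (\L_Y g)_{\text{traceless}}$ has the same fixed projection $h_0^{\text{hol}}$ onto this cokernel, and pointwise positivity $h - \L_Y g \geq 0$ forces $\phi - \text{div}(Y) \geq |h_0 - (\L_Y g)_{\text{traceless}}|$ pointwise, which on integration yields a lower bound on $\int_M \phi\,dV$ in terms of the pointwise size of $h_0^{\text{hol}}$. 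Thus (ii) reduces to engineering $h_0$ with a large holomorphic-projection relative to how much $\phi$ one is forced to carry.

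The main obstacle is this simultaneous control. Condition (i) demands that the geodesic average of $\phi$ dominate any negative contribution from $h_0$, while (ii) demands $\int_M \phi\,dV$ be dwarfed by the size of $h_0^{\text{hol}}$. I plan to reconcile the tension by taking $\phi$ a small positive constant and concentrating $h_0$ in a small disk $U \subset M$ chosen to be disjoint from the finitely many closed $g$-geodesics of length below a large threshold $L_0$: short geodesics then see only the $\phi g$ term and satisfy (i) automatically, while for longer geodesics equidistribution for the Anosov geodesic flow of $(M,g)$ bounds the $h_0$-contribution to $\int h(\dot\gamma,\dot\gamma)\,ds/\ell$. Making $|h_0|$ large (and compensating by taking $t$ small to preserve $C^\infty$-smallness of $th$) boosts $|h_0^{\text{hol}}|$ via a Bergman-kernel lower bound. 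The technical heart of the proof is beating the equidistribution upper bound on the geodesic average of $h_0(\dot\gamma,\dot\gamma)$ against the Bergman-kernel lower bound for the $L^1$-size of $h_0^{\text{hol}}$; this requires a spectral-gap estimate for the geodesic flow at intermediate length scales and placing $U$ in a region where the Bergman kernel of $(M,g)$ is nondegenerate.
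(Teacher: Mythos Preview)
Your route is entirely different from the paper's. The paper gives a direct, finite construction around a shortest figure-eight geodesic $\gamma$: one loop is shortened by $\xi_1$, the other lengthened by $\xi_2>\xi_1$, while keeping $\gamma$ geodesic and forcing $\MLS_{g'}>(1+\epsilon)\MLS_g$ by a conformal inflation outside a tube. A hypothetical shrinking $f$ must carry $\gamma$ near a figure-eight $\eta\in\F$; splicing the $f$-image of the short loop onto the long loop of $\eta$ via a tiny connecting arc then produces a competitor strictly shorter than $\eta$, contradicting minimality. No linearization, no compactness, no holomorphic differentials.

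Your reduction to condition (ii) has a genuine gap. Arzel\`a--Ascoli and the ``surjective $1$-Lipschitz $\Rightarrow$ isometry'' argument do give $f_t\to\id$ in $C^0$, but nothing more: writing $f_t=\exp(u_t)$ you need $\|u_t\|_{C^1}=O(t)$, and the one-sided inequality $f_t^*g\le g+th$ does not provide it. If $Df_t=I+B_t$ with $B_t$ skew of size $\sqrt{t}$ (local rotation), then $f_t^*g-g=B_t^TB_t\sim tI\le th$ wherever $h\ge cI$, yet $u_t/t$ blows up. More structurally, $\L_{u_t}g\le th$ bounds eigenvalues from above only; the divergence constraint $\int\text{div}(u_t)=0$ yields merely $\|\L_{u_t}g\|_{L^1}=O(t)$, and Korn's inequality \emph{fails} in $L^1$ (Ornstein), so the elliptic bootstrap you invoke is unavailable. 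Without it you never produce the limiting $Y$, and (ii) is never used.

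There is a second, independent problem in the construction of $h$. With $h_0$ a bump of amplitude $A$ in a disk of radius $r$, the holomorphic projection has size $\sim Ar^2$, so (ii) forces $\phi\lesssim Ar^2$. For (i) on long geodesics, exponential mixing controls $\tfrac{1}{\ell}\int_\gamma h_0$ by $e^{-\alpha L_0}$ times a H\"older norm of $h_0$, which scales like $Ar^{-\beta}$, so you need $\phi\gtrsim e^{-\alpha L_0}Ar^{-\beta}$. But a disk of radius $r$ avoiding all $\sim e^{hL_0}$ closed geodesics of length $\le L_0$ is forced to have $r\lesssim e^{-hL_0}$. Combining these yields a spectral-gap requirement of the form $\alpha>(2+\beta)h$ that has no reason to hold for a general negatively curved metric; this is the ``technical heart'' you flag, but as stated it is an obstruction, not a proof.
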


	\begin{rem} We make two observations regarding Theorem \ref{thm:perturb}.
		\begin{enumerate}
			\item Although the result is stated for Riemannian metrics, we note that our arguments work in the setting of Finsler metrics as well. We also note that the result gives higher dimensional examples by embedding the surface as a totally geodesic submanifold. 
			
			\item We say that a homeomorphism $f : (M,g') \rightarrow (M,g)$ is \emph{length shrinking} if 
			\[ d_{g}(f(a), f(b)) \leq d_{g'}(a,b) \text{ for all } a,b \in M.\]
			Although the results and arguments are stated for the case where $f$ is a diffeomorphism, they can be adapted to show that there does not exist a length shrinking homeomorphism as well.
		\end{enumerate}
		
	\end{rem}
	
	We provide a brief description of the proof, with the full proof following in Section \ref{sec:proof}. Fixing a negatively curved metric $g$, let $\gamma$ be a $g$-geodesic with a single self-intersection so that it forms a ``figure eight.'' Suppose that we have constructed a metric $g'$ in such a way that for every length shrinking diffeomorphism \hbox{$f : (M,g') \rightarrow (M,g)$} we have that $\gamma_f \coloneqq f \circ \gamma$ is homotopic to $\gamma$. Indeed, if $\gamma$ is the shortest figure eight $g$-geodesic and has multiplicity one in the length spectrum of $(M,g)$, then $\gamma_f$ is homotopic to $\gamma$ provided $g'$ is sufficiently close to $g$. Furthermore, we suppose that $g'$ is constructed so that the following three properties hold: one loop of $\gamma$ gets shorter by some amount $\xi_1$ while the other gets longer by some amount $\xi_2$ $>\xi_1$, the marked length spectrum of $g'$ is strictly larger than the marked length spectrum of $g$, and $\gamma$ is a $g'$-geodesic after a reparameterization. We also assume that $g'$ has been constructed so that $\xi_2$ can be made arbitrarily close to $\xi_1$ without affecting the above properties. 
	
	With the above setup, we have the ingredients necessary to prove the result. Suppose for contradiction there is a shrinking diffeomorphism $f : (M,g') \rightarrow (M,g)$, and let $p$ be the point of self-intersection for $\gamma$. Using the shrinking property of $f$, we have 
	\[ \ell_g(\gamma) \leq \ell_g(\gamma_f) \leq \ell_{g'}(\gamma) = \ell_g(\gamma) + \xi_2 - \xi_1. \] 
	In particular, this implies that $|\ell_g(\gamma_f) - \ell_g(\gamma)| \leq \xi_2 - \xi_1.$
	If $\xi_2$ is sufficiently close to $\xi_1$, then the fact that closed geodesics which are close in length must be $C^0$-close implies that $\gamma_f$ is $C^0$-close to $\gamma$ (see Lemma ~\ref{lem:lengthc0}). The contradiction now follows from the fact that we can use the (very short) $g$-geodesic connecting $f(p)$ to $p$ along with $\gamma_f$ to construct a curve in the same homotopy class as $\gamma$ which has $g$-length shorter than $\gamma$. Namely, letting $\nu$ be the $g$-geodesic connecting $f(p)$ to $p$ and letting $\gamma_f^i$ be the $i$\textsuperscript{th} loop with $i=1,2$, we have that the concatenated curve $\gamma^2 \nu \gamma_f^1 \nu^{-1}$ has $g$-length less than $\ell_g(\gamma)$. 
	
		The challenge in this proof lies in constructing a metric $g'$ satisfying the properties above. While it is not particularly difficult to construct $g'$ such that for each figure eight, we have that one loop contracts, another loop expands, and the curve is a $g'$-geodesic after a reparameterization, it is difficult to ensure that the marked length spectrum of the new metric is longer than the marked length spectrum of the old metric. Moreover, even after showing that the marked length spectrum is larger, it is difficult to ensure that this bound is multiplicative as opposed to additive. By carefully adjusting the shrinking and expanding parameters and controlling how much the metric expands outside a small neighborhood around the figure-eight, we show in Section \ref{sec:construction} that all of the desired properties hold for this new metric $g'$.
	
	The organization of the paper is as follows. In Section \ref{sec:preliminaries}, we recall some basic facts needed throughout. In Section \ref{sec:outline}, we give a more detailed outline of the proof. In Section \ref{sec:construction}, we construct the metrics satisfying all of the properties outlined above.

	\subsection*{Acknowledgements}
	We would like to acknowledge the anonymous referee for many useful comments, as well as for pointing out a gap in the proof of Theorem \ref{thm:perturb}.
	
	\section{Preliminaries} \label{sec:preliminaries}
	
	Throughout, let $M$ be a closed, connected, orientable surface, let $g$ be a negatively curved Riemannian metric on $M$, and let $p : TM \rightarrow M$ be the footprint map. All geodesics are parameterized with respect to arclength unless otherwise stated. The \emph{connection map} is given by
	\[ K : TSM \rightarrow TM, \quad K_{(x,v)}(\xi) := \frac{D_{p \circ V} V}{dt}(0),\]
	where $V$ is a curve on $SM$ satisfying $V(0) = (x,v)$ and $\dot{V}(0) = \xi$. One can decompose the tangent space of the tangent bundle at $(x,v) \in SM$ using the connection and footprint maps:
	\[ T_{(x,v)}SM = \ker(K_{(x,v)}) \oplus \ker(D_{(x,v)}p).\]
	The \emph{Sasaki metric} $g_{SM}$ is the metric on $TTM$ induced by $g$ which makes these spaces orthogonal:
	\[ (g_{SM})_{(x,v)}(\xi, \xi') := g_x(D_{(x,v)}p(\xi),D_{(x,v)}p(\xi')) + g_x(K_{(x,v)}(\xi),K_{(x,v)}(\xi')).  \]
	Let $\gamma, \eta$ be two smooth $g$-geodesics on $M$. The \emph{$C^1$-distance} between them is given by $d_{C^1}(\gamma, \eta) \coloneqq d^{SM}_{C^0}(\tilde{\gamma}, \tilde{\eta}),$
	where $d^{SM}$ is the metric induced by the Sasaki metric and $\tilde{\gamma}(t) \coloneqq (\gamma(t), \dot{\gamma}(t))$.
	
	The following three lemmas are standard, and will be used throughout. The first lemma is a standard result in the study of geodesics in tubular neighborhoods. We sketch a proof of it for convenience.
	
	\begin{lemma}\label{lem:c1geodesic}
		Let $(M,g)$ be as above and let $\gamma : [0,T] \rightarrow M$ be a closed $g$-geodesic. For every $\epsilon > 0$, there is an open set neighborhood $U_\epsilon \supseteq \text{Im}(\gamma)$ such that if $\eta|_{[0,T]}$ is a $g$-geodesic segment with $\text{Im}(\eta) \subseteq U_\epsilon$, then $d_{C^1}(\gamma, \eta|_{[0,T]}) < \epsilon$. 
	\end{lemma}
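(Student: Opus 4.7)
The plan is to argue by contradiction, using compactness of the unit tangent bundle $SM$ together with continuous dependence of the geodesic flow on initial conditions. Suppose the conclusion fails: then for some $\epsilon_0>0$ there exist a sequence of open neighborhoods $U_n\supseteq \text{Im}(\gamma)$ with $\bigcap_n \overline{U_n}=\text{Im}(\gamma)$ (take, for instance, the $1/n$-tubular neighborhoods) together with $g$-geodesic segments $\eta_n:[0,T]\to M$ satisfying $\text{Im}(\eta_n)\subseteq U_n$ but $d_{C^1}(\gamma,\eta_n|_{[0,T]})\geq\epsilon_0$. The lifted initial conditions $\tilde\eta_n(0)=(\eta_n(0),\dot\eta_n(0))$ lie in the compact manifold $SM$, so after passing to a subsequence they converge to some $(x_0,v_0)\in SM$; since $\eta_n(0)\in U_n$, the limit footpoint satisfies $x_0\in \text{Im}(\gamma)$, say $x_0=\gamma(t_0)$.

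Let $\eta_\infty$ be the geodesic with initial condition $(x_0,v_0)$. Continuous dependence of the geodesic flow on the compact interval $[0,T]$ gives $\tilde\eta_n\to\tilde\eta_\infty$ uniformly in $SM$, so in particular $d_{C^1}(\eta_n,\eta_\infty)\to 0$ and $\text{Im}(\eta_\infty)\subseteq\bigcap_n\overline{U_n}=\text{Im}(\gamma)$. A geodesic whose image is contained in $\text{Im}(\gamma)$ and which is tangent to $v_0$ at $x_0=\gamma(t_0)$ must have $v_0=\pm\dot\gamma(t_0)$, and uniqueness of geodesics then forces $\eta_\infty(t)=\gamma(\pm t+t_0)$ on $[0,T]$.

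To reach a contradiction one must upgrade this to $d_{C^1}(\gamma,\eta_n)\to 0$. I would adopt the convention --- implicit in the statement of the lemma --- that each $\eta_n$ is parameterized to be oriented consistently with $\gamma$ and to start near $\gamma(0)$; without such a convention the conclusion fails for the trivial reasons of an orientation reversal or a time shift. With this choice the limit becomes $\eta_\infty=\gamma$, contradicting $d_{C^1}(\gamma,\eta_n)\geq\epsilon_0$. The analytic core of the argument --- compactness plus continuous dependence on initial conditions --- is entirely routine and does not use negative curvature; the only subtle point, and the main (minor) obstacle, is the bookkeeping of the parameterization: one must pin down a canonical starting point and orientation for each geodesic segment $\eta$ contained in a thin tube around $\gamma$.
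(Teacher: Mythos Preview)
Your argument is correct and takes a genuinely different route from the paper's. The paper works directly in Fermi coordinates along $\gamma$: it covers $\text{Im}(\gamma)$ by finitely many thin boxes with the bipoint uniqueness property, observes that any geodesic segment spanning such a box has endpoints $C^0$-close to $\gamma$, and then invokes an external lemma (from \cite[Section 4]{obstacle}) to upgrade this to $C^1$-closeness. Your compactness-plus-contradiction argument is more self-contained---it avoids the external citation and makes transparent, as you note, that negative curvature plays no role. What the paper's approach buys is a more explicit description of $U_\epsilon$ as a union of Fermi boxes satisfying bipoint uniqueness; this structure is actually used later (see the proof of Claim~\ref{claim:MLS1}), so the paper is also setting up terminology. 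You are also more candid than the paper about the parameterization issue: as literally stated, the conclusion $d_{C^1}(\gamma,\eta|_{[0,T]})<\epsilon$ is false without a convention fixing the orientation and starting point of $\eta$, and the paper leaves this implicit. One small addendum to your Step~4: if $\gamma$ has self-intersections, the limit footpoint $x_0$ may equal $\gamma(t_0)=\gamma(t_1)$ for distinct $t_0,t_1$, so $v_0$ could lie in $\{\pm\dot\gamma(t_0),\pm\dot\gamma(t_1)\}$; this does not affect the conclusion that $\eta_\infty$ is a time-shift (and possible reversal) of $\gamma$, but is worth noting.
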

	
	\begin{proof}
		Fix a point $t \in [0,T]$ and consider Fermi coordinates around $\gamma(t)$.  In particular, we can take a neighborhood $U_{t,\epsilon}$ around $\gamma(t)$ to be a box centered along $\gamma$ in these Fermi coordinates. Assume that this box is sufficiently small so that it satisfies the \emph{bipoint uniqueness} condition, in the sense that for any two points $p,q \in U_\epsilon$ which are sufficiently close, we have that there is a unique minimal geodesic segment joining $p$ and $q$ contained in $U_\epsilon$. Note that shrinking the width of this box does not change this property.
	
		By shrinking the width of this box, we see that any $g$-geodesic segment which crosses the length has endpoints $C^0$-close to $\gamma$. Using the lemma in \cite[Section 4]{obstacle}, there is a width of the box satisfying the property that any $g$-geodesic segment which crosses the length must be $C^1$-close to $\gamma$ in this box. Using compactness of the interval, we can cover $\text{Im}(\gamma)$ by finitely many $U_{t,\epsilon}$, and taking the union gives us our neighborhood $U_\epsilon$. Moreover, shrinking the widths of these boxes further if needed, we may assume that if $\eta$ crosses the lengths of one of the boxes, then the entirety of $\eta$ is $C^1$-close to $\gamma$. Finally, if $\eta$ is a $g$-geodesic satisfying $\text{Im}(\eta|_{[0,T]}) \subseteq U_\epsilon$, then by construction it is $C^1$-close to $\gamma$.
	\end{proof}
	
	The next lemma is a standard result which allows us to relate lengths and the $C^0$-metric. For details, we point the reader to the proof of Theorem 2 in \cite[Chapter 2]{doCarmo}.
	
	\begin{lemma} \label{lem:lengthc0}
		Let $(M,g)$ be as above, let $g$ be a metric on $M$, and let $\gamma$ be a closed $g$-geodesic. For all $\epsilon > 0$, there is a $\delta > 0$ such that if $\eta$ is a closed curve homotopic to $\gamma$ satisfying $|\ell_g(\eta) - \ell_g(\gamma)| < \delta$, then $d_{C^0}(\eta, \gamma) < \epsilon$.
	\end{lemma}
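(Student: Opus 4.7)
The plan is to argue by contradiction, combining compactness of $M$ with the fact that in negative curvature $\gamma$ is the unique length minimizer in its free homotopy class (see \cite[Chapter 12]{doCarmo}).

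Suppose the lemma fails. Then there exist $\epsilon_0 > 0$ and a sequence of closed curves $\eta_n$, each homotopic to $\gamma$, with $\ell_g(\eta_n) \to \ell_g(\gamma)$ and $d_{C^0}(\eta_n, \gamma) \geq \epsilon_0$ for every $n$. I would reparameterize each $\eta_n$ on the common interval $[0,1]$ at constant speed, so its Lipschitz constant equals $\ell_g(\eta_n)$ and is therefore uniformly bounded. Compactness of $M$ and the Arzel\`a--Ascoli theorem then produce, after passing to a subsequence, a uniform limit $\eta_n \to \eta_\infty$, where $\eta_\infty : [0,1] \to M$ is Lipschitz.

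Uniform convergence of loops in the compact manifold $M$ preserves the free homotopy class (pointwise distances fall below the injectivity radius for $n$ large, giving an explicit geodesic homotopy), so $\eta_\infty$ is freely homotopic to $\gamma$. Length is lower semicontinuous under uniform convergence of uniformly Lipschitz curves, hence
\[
\ell_g(\eta_\infty) \leq \liminf_{n \to \infty} \ell_g(\eta_n) = \ell_g(\gamma).
\]
Since $\gamma$ is the unique length minimizer in its free homotopy class, $\eta_\infty$ must coincide with $\gamma$ up to reparameterization, and since both are constant speed, up to an affine shift of the parameter. This contradicts $d_{C^0}(\eta_n, \gamma) \geq \epsilon_0$, provided the basepoint shift is absorbed into the definition of $d_{C^0}$.

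The main thing to be careful about is precisely this basepoint issue: $d_{C^0}$ on closed curves is naturally defined as an infimum over orientation-preserving reparameterizations, since a closed curve has no canonical starting point, and under this convention the contradiction above closes. As an alternative route that avoids compactness, one can lift to the universal cover $\tilde M$ and exploit strict convexity of the displacement function $x \mapsto d_{\tilde g}(x, Tx)$ along the axis of the deck transformation $T$ corresponding to $\gamma$, noting that a lift $\tilde\eta$ of $\eta$ satisfies $d_{\tilde g}(\tilde\eta(t), T \tilde\eta(t)) \leq \ell_g(\eta)$, which pins $\tilde\eta$ inside a tubular neighborhood of $\tilde\gamma$ whose radius is controlled by $\ell_g(\eta) - \ell_g(\gamma)$.
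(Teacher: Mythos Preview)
Your argument is correct. The paper does not actually supply its own proof of this lemma: it merely states that the result is standard and points the reader to the proof of Theorem~2 in \cite[Chapter~2]{doCarmo}. Your compactness argument via Arzel\`a--Ascoli, lower semicontinuity of length, and uniqueness of the minimizing closed geodesic in negative curvature is one of the standard routes, and your handling of the basepoint/reparameterization issue in the definition of $d_{C^0}$ is appropriate. The alternative you sketch through the universal cover and strict convexity of the displacement function $x\mapsto d_{\tilde g}(x,Tx)$ is in fact closer in spirit to the kind of comparison-geometry reasoning one typically finds in do~Carmo, and has the minor advantage of giving an effective (rather than soft) dependence of $\delta$ on $\epsilon$.
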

	
	Finally, given a $g$-geodesic $\gamma$, we are able to smoothly perturb the metric $g$ to get a new metric $g_s$, where a reparameterization of $\gamma$ is still a $g_s$-geodesic. 
	
	\begin{lemma} \label{lem:bumpnbhd}
		Let $M$ be a closed, connected, oriented surface, let $g$ be a negatively curved metric on $M$, let $\gamma$ be a $g$-geodesic on $M$, and let $p \in \gamma$. There exists an open neighborhood $U$ of $p$ such that for every $s$ with $|s| < 1$, every open neighborhood $V$ of $p$ with $V\subseteq U$, and every closed neighborhood $A$ of $p$ with $A \subseteq V$, we can find a smooth bump function $\kappa_s : M \rightarrow \R$ satisfying 
		\begin{itemize}
			\item $(\kappa_s)|_{A} \equiv (1+s)^2$,
			\item $(\kappa_s)|_{V^c} \equiv 1$, where $V^c$ is the complement of $V$,
			\item $g_s := \kappa_s g$ defines a new metric with the property that $\gamma$ defines a $g_s$-geodesic after a reparameterization.
		\end{itemize}
	\end{lemma}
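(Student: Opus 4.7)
My strategy is to exploit the conformal geometry of geodesics: a $g$-geodesic $\gamma$ remains a pregeodesic of a conformally scaled metric $e^{2\phi}g$ if and only if the gradient of $\phi$ is tangent to $\gamma$ at every point of $\gamma$. Writing $\kappa_s = e^{2\phi}$ and working in Fermi coordinates $(t,r)$ around $\gamma$ (so that $\gamma = \{r = 0\}$), this reduces to the first-order condition $\partial_r \kappa_s(t, 0) \equiv 0$. I would verify this reduction using the standard formula
\[ \tilde\nabla_X Y = \nabla_X Y + (X\phi) Y + (Y\phi) X - g(X,Y)\nabla\phi \]
for the Levi-Civita connection of $e^{2\phi}g$, applied to $\tilde\nabla_{\dot\gamma}\dot\gamma$ with $\dot\gamma$ of unit $g$-speed: the tangential part only reparameterizes $\gamma$, so only the normal component $-(\nabla\phi)_\perp$ must vanish along $\gamma$.

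Given the reduction, I would pick $U$ to be a Fermi coordinate tubular neighborhood of the arc of $\gamma$ through $p$, thin enough that $(t,r)$ are defined on all of $U$ and $\gamma \cap U = \{r = 0\}$. For arbitrary $V$ open and $A$ closed with $A \subseteq V \subseteq U$, compactness of $A$ and openness of $V$ yield a closed buffer $A'$ with $A \subseteq \text{int}(A') \subseteq A' \subseteq V$. By smooth Urysohn, choose $\tilde\rho \colon M \to [0,1]$ with $\tilde\rho \equiv 1$ on $A'$ and $\supp(\tilde\rho)$ compactly contained in $V$. This bump already has the right values on $A$ and outside $V$; its normal derivative $h(t) := \partial_r \tilde\rho(t, 0)$ along $\gamma$ is compactly supported in $\{t : (t,0) \in V \setminus A'\}$ and hence bounded away from $\{t : (t,0) \in A\}$.

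To kill $h$ along $\gamma$, choose an auxiliary smooth cutoff $\chi \colon M \to [0,1]$ with $\chi(t,0) \equiv 1$ on $\supp(h)$ and $\supp(\chi) \subseteq V \setminus A$; such $\chi$ exists by smooth Urysohn because the buffer $A'$ makes the compact set $\supp(h) \times \{0\}$ sit positively far from both $A$ and $M \setminus V$. Define
\[ \rho(t,r) := \tilde\rho(t,r) - h(t)\,r\,\chi(t,r) \]
on $U$ (and $\rho := \tilde\rho$ elsewhere), and set $\kappa_s := 1 + ((1+s)^2 - 1)\rho$. Then $\chi \equiv 0$ on $A$ and outside $V$ preserves the first two bullets, while the computation $\partial_r \rho(t,0) = h(t) - h(t)\chi(t,0) = 0$ for every $t$ gives the pregeodesic condition via the reduction above.

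The hard part is arranging the correction so that fixing the normal derivative along $\gamma$ does not spoil the prescribed values of $\kappa_s$ on $A$ or outside $V$. The buffer set $A'$ is what makes this compatible: forcing $\tilde\rho$ to be constant on an open neighborhood of $A$ rather than just on $A$ itself pushes $\supp(h)$ a positive distance away from $A$, which is exactly the room needed for $\chi$ to be $1$ on $\supp(h)$ while simultaneously vanishing on $A$. A final small check is that the correction term can be made uniformly small (by shrinking the $r$-extent of $\supp(\chi)$) so that $\rho$ stays close to $[0,1]$ and $\kappa_s > 0$ throughout, ensuring that $g_s = \kappa_s g$ is an honest Riemannian metric.
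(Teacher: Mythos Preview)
Your proposal is correct and aligns with the paper's approach: the paper simply says ``We omit the proof, as it is an easy calculation in Fermi coordinates,'' and your argument is exactly such a calculation, carried out in full. The reduction via the conformal connection formula to the normal-derivative condition $\partial_r\kappa_s(t,0)=0$, followed by the buffered Urysohn construction and first-order correction along $\gamma$, is a clean way to fill in the details the paper leaves implicit.
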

	
	We omit the proof, as it is an easy calculation in Fermi coordinates.
	
	\begin{rem} \label{rem:supp}  As we vary $s$ and $V$ in the last lemma, we get smooth perturbations of $g$. Negative curvature is an open condition, so if we fix the neighborhood $U$ coming from the claim, then there is an $s_0$ so that if $|s| \leq s_0$, then $g_s$ is also a negatively curved metric. Thus, there is an $s_0 > 0$ so that if $|s| \leq s_0$ and $V \subseteq U$, then $g_s$ is a negatively curved metric.
	\end{rem}

	\section{Proof of Theorem \ref{thm:perturb}} \label{sec:proof}
	
	Throughout, let $M$ be a closed, connected, oriented surface and let $g$ be a negatively curved metric on $M$. For notational convenience, let $\MLS(g) \coloneqq \MLS_g$.
	As noted in \cite[Theorem 4.2.4]{buser}, there is at least one shortest $g$-geodesic with a self-intersection, and such a $g$-geodesic will have exactly one self-intersection. Let $\F$ denote the collection of shortest $g$-geodesics with a single self-intersection. Notice that there are finitely many of them, and they all have the same length. For each $\gamma \in \F$, denote the shorter loop by $\gamma^1$ and the longer loop by $\gamma^2$, so $\ell_g(\gamma^1) \leq \ell_g(\gamma^2)$. Let $\gamma_{\short} \in \F$ be such that $\ell_g(\gamma_{\short}^1) \leq \ell_g(\gamma^1$) for all $\gamma \in \F$. In other words, $\gamma_\short$ has the shortest first loop.
	
	\subsection{Outline of the Proof} \label{sec:outline}
	
	The goal is to perturb our metric $g$ so that we get a new metric $g'$ which is $C^\infty$-close to $g$ and such that the following holds:
	\begin{enumerate}
		\item \label{item4} for every shrinking diffeomorphism $f : (M,g') \rightarrow (M,g)$ and $\gamma \in \F$, we have that there is an $\eta \in \F$ so that $f \circ \gamma$ is homotopic to $\eta$,
		\item \label{item1} each $\gamma \in \F$ is a $g'$-geodesic after reparameterization,
		\item \label{item2} there are constants $0 < \xi_{1} < \xi_{2}$ so that for every $\gamma \in \F$ we have 
		\[ \ell_{g'}(\gamma^1) = \ell_g(\gamma^1) - \xi_{1} \text{ and } \ell_{g'}(\gamma^2) = \ell_g(\gamma^2) + \xi_{2},\]
		\item \label{item3} $\MLS(g') > (1+\epsilon)\MLS(g)$. 
		
	\end{enumerate}
	
	Our method of constructing these metrics will also ensure that we are able to further perturb $g$ so that $\xi_2$ is arbitrarily close to $\xi_1$ and the above properties hold.
	
	Given a shrinking diffeomorphism $f : (M,g') \rightarrow (M,g)$, we can use property \eqref{item4} to find an $\eta \in \F$ such that ${\gamma_f \coloneqq f \circ \gamma_\short}$ is homotopic to $\eta$. Notice that we have 
	\[ \ell_g(\eta) = \ell_g(\gamma_\short) \leq \ell_g(\gamma_f) \leq \ell_{g'}(\gamma_\short) = \ell_g(\eta) + \xi_2 - \xi_1.\]
	Using Lemma \ref{lem:lengthc0}, we may assume that $\xi_2$ is close enough to $\xi_1$ so that if $q$ is the point of self-intersection for $\eta$ and $p$ is the point of self-intersection for $\gamma_f$, then for any shrinking diffeomorphism we have $d_g(q, p) < \xi_1/2$. 
	
	Assuming we can construct a metric $g'$ close to $g$ with the above properties, then we have all of the ingredients to show that there cannot be a shrinking diffeomorphism $f : (M,g') \rightarrow (M,g)$, as described in Section \ref{sec:introduction}.
	
	\begin{proof}[Proof of Theorem \ref{thm:perturb}]
		Assume for contradiction that $f : (M,g') \rightarrow (M,g)$ is shrinking. Let $\gamma_f$, $\eta$, $p$ and $q$ be as above. Since $\gamma_f$ is homotopic to $\eta$, we must have that $\gamma_f^1$ is homotopic to either $\eta^1$ or $\eta^2$. Without loss of generality, assume it is homotopic to $\eta^1$. Let $\nu$ be the unique $g$-geodesic connecting $p$ and $q$. By the above discussion, we have $\ell_g(\nu) < \xi_1/2$. Concatenating $\gamma_f^1$ with $\nu$, $\nu^{-1}$, and $\eta^2$, we get a new figure eight curve in the same free homotopy class as $\eta$. Using the length shrinking property, notice that the first loop has length
		\[ \ell_g(\nu^{-1} \gamma_f^1 \nu) < \xi_1 + \ell_g(\gamma_f^1) \leq \xi_1 + \ell_{g'}(\gamma^1_\short) = \ell_g(\gamma^1_\short) \leq \ell_g(\eta^1).  \]
		Thus the curve $\eta^2 \nu^{-1} \gamma^1_f \nu$ has $g$-length smaller than $\eta$, which contradicts the fact that $\eta$ is the curve with the shortest length in its free homotopy class.
	\end{proof}
	
	The main difficulty is constructing the perturbation of $g$ so that the above properties hold. We now describe the intuition behind the construction. Given a curve with one intersection $\gamma \in \F$, we can use Lemma \ref{lem:bumpnbhd} to construct a new metric which is $C^\infty$-close to the original metric and which makes one loop shorter and the other loop longer. We refer to the neighborhood which shrinks a loop of $\gamma$ as the ``shrinking neighborhood'' of $\gamma$, and the neighborhood which expands a loop of $\gamma$ as the ``expanding neighborhood'' of $\gamma$. We also construct the metric in such a way that, outside of a small neighborhood $W$ of $\gamma$, we have that the new metric expands the lengths of curves uniformly compared to the original metric. In this setup, it is clear that any curve which stays outside of $W$ must get longer, so we have an inequality on the marked length spectrum as long as the geodesic does not cross the shrinking neighborhood.
	
	The only problem now is if a geodesic intersects the shrinking neighborhood for $\gamma$, so that a portion of it gets shorter. By making $W$ smaller and using Lemma \ref{lem:c1geodesic}, we can ensure that either this geodesic has to leave the neighborhood $W$, and thus get longer by some amount, or it has to cross the expanding neighborhood for $\gamma$. In either case, after carefully adjusting parameters to ensure that every geodesic gets uniformly longer, we have an inequality on the marked length spectrum and we have the constants $\xi_1$ and $\xi_2$ described above. We will see in Claims \ref{claim:MLS1} and \ref{claim:MLS} that more care has to be taken for the multiplicative bound, but this idea is sufficient for getting an additive bound (see Claim \ref{claim:MLS0}).
	
	By further adjusting the parameters, we can guarantee that the perturbed metric $g'$ is $C^\infty$-close to $g$, and $\xi_2$ is as close to $\xi_1$ as we wish. We will show in Claim \ref{claim:isotopy} that as long as $g'$ and $g$ are sufficiently close, then given any shrinking diffeomorphism $f : (M,g') \rightarrow (M,g)$ and any $\gamma \in \F$, we have that $f \circ \gamma$ is homotopic to some $\eta \in \F$. As long as there is only one curve in $\F$, we will see that we can adjust the parameters so that this metric can be used to prove Theorem \ref{thm:perturb}.
	
	Finally, we will observe that if $\F$ has more than one curve, then we only need to somewhat modify the above construction. This modification, along with the above argument, completes the proof of Theorem \ref{thm:perturb}.
	
	\subsection{Construction of the Metrics} \label{sec:construction}
	
	We start by finding a family of metrics which satisfy property \eqref{item4} from above.
	
	\begin{claim} \label{claim:isotopy}
		There exists a $C^\infty$ neighborhood $U$ of $g$ so that if $g' \in U$, then for every shrinking diffeomorphism $f : (M,g') \rightarrow (M,g)$ and every $\gamma \in \F$ we have that $f \circ \gamma$ is homotopic to some $\eta \in \F$.
	\end{claim}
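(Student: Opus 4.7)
My plan is to combine a length-gap argument for the $g$-length spectrum with the classical result of Hass and Scott that, on a non-positively curved closed surface, a closed geodesic realizes the minimum geometric self-intersection number in its free homotopy class.

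Let $L := \ell_g(\gamma)$ for any $\gamma \in \F$. First, I would define the gap
\[
\delta := \min\bigl\{\, \ell_g(\gamma') - L \;:\; \gamma' \text{ a closed } g\text{-geodesic with a self-intersection, } \gamma' \notin \F\,\bigr\},
\]
which is strictly positive since only finitely many closed $g$-geodesics have length below any fixed bound. Because $\F$ is finite and the map $g' \mapsto \ell_{g'}(\gamma)$ is $C^0$-continuous in the metric, I would then shrink the $C^\infty$-neighborhood $U$ of $g$ until $\ell_{g'}(\gamma) < L + \delta$ for every $\gamma \in \F$ and every $g' \in U$.

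Now fix $g' \in U$, a shrinking diffeomorphism $f : (M, g') \to (M, g)$, and $\gamma \in \F$. Let $\sigma$ denote the free homotopy class of $f \circ \gamma$, and let $\gamma_\sigma$ be the closed $g$-geodesic in $\sigma$. The shrinking property gives $\ell_g(f \circ \gamma) \le \ell_{g'}(\gamma) < L + \delta$, and because $\gamma_\sigma$ minimizes $g$-length in its free homotopy class,
\[
\ell_g(\gamma_\sigma) \;\le\; \ell_g(f \circ \gamma) \;<\; L + \delta.
\]
The crux is to verify that $\gamma_\sigma$ has at least one self-intersection; granted this, the gap together with the lower bound $\ell_g(\gamma_\sigma) \ge L$ from the definition of $\F$ will force $\ell_g(\gamma_\sigma) = L$ and hence $\gamma_\sigma \in \F$. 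To obtain the self-intersection, I would argue that since $\gamma$ is itself a closed $g$-geodesic with exactly one self-intersection, Hass-Scott gives minimum geometric self-intersection number $1$ in the class $[\gamma]$; since $f$ is a diffeomorphism (so both $f$ and $f^{-1}$ preserve the number of transverse self-intersections of any curve), the minimum in $\sigma = f_*[\gamma]$ also equals $1$; applying Hass-Scott once more to $\sigma$ yields that $\gamma_\sigma$ has exactly one self-intersection, completing the claim with $\eta := \gamma_\sigma$.

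The main obstacle is precisely the Hass-Scott input. Without it one could not exclude that $\sigma$ is the free homotopy class of a simple closed $g$-geodesic of length slightly greater than $L$ (for example, a short simple closed geodesic lying just below the next self-intersecting one), in which case $\gamma_\sigma$ would not lie in $\F$ and the length-gap argument would fail. The remaining ingredients -- discreteness of the length spectrum, continuity of length in the metric, and the preservation of self-intersection counts by diffeomorphisms -- are routine.
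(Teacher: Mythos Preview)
Your argument is correct and takes a genuinely different route from the paper's. The paper proceeds by an iterative length-level argument: it shows that for $g'$ sufficiently close to $g$, any shrinking diffeomorphism must permute the finite set of $g$-geodesics of each fixed length up to the level $L = \ell_g(\gamma)$, so that the geodesic $\eta$ in $[f\circ\gamma]$ satisfies $\ell_g(\eta)=L$ exactly. It then forces $\eta$ to have a self-intersection via $C^0$-closeness of $f\circ\gamma$ to $\eta$ (Lemma~\ref{lem:lengthc0}) together with an implicit topological argument: if $\eta$ were simple, $f\circ\gamma$ would lie in an annular neighborhood of $\eta$, but a figure-eight whose two loops are both essential cannot sit in an annulus while being homotopic to the core.

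Your approach replaces both steps by a single invocation of Hass--Scott. You only need the cruder bound $\ell_g(\gamma_\sigma) < L+\delta$ (no iteration), and you obtain the self-intersection of $\gamma_\sigma$ directly from the fact that geodesics realize minimal geometric self-intersection number and that this number is a diffeomorphism invariant of the free homotopy class. This is shorter and conceptually cleaner, at the cost of importing an external (though standard) theorem; the paper's proof is more self-contained but leaves the annulus step somewhat implicit. Both arguments need the same elementary observation that the two loops of $\gamma$ are essential, which you use implicitly when asserting that the minimal self-intersection number of $[\gamma]$ is $1$ rather than $0$.
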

	
	\begin{proof}
		For $T \in \R$, let 
		\[ P_g(T) = \{ \eta \ | \ \eta \text{ is a } g\text{-geodesic and } \ell_g(\eta) \leq T\}.\] 
		We break this up into a series of steps.
		\begin{enumerate}[$\text{Step }$1:]
			\item Let $\eta_0$ be a $g$-geodesic of shortest length and let $T_0 = \ell_g(\eta_0)$. We claim that as long as $g'$ is sufficiently $C^\infty$-close to $g$, then $f$ preserves $P_g(T_0)$ up to permutation. In other words, if $\eta \in P_g(T_0)$ and $f \circ \eta$ is homotopic to $\eta'$, then $\eta' \in P_g(T_0)$. 
			
			Since $f$ is a shrinking diffeomorphism, we have
			\[ \ell_g(\eta') \leq \ell_g(f \circ \eta) \leq \ell_{g'}(\eta).\]
			Let $\delta > 0$ be such that $P_g(T_0) \subsetneq P_g(T_0 + \delta)$. Notice there are finitely many geodesics in $P_g(T_0 + \delta)$, so if we let 
			\[\Lambda \coloneqq \min\{ \ell_g(\eta') \ | \ \eta' \in P_g(T_0 + \delta) \setminus P_g(T_0)\},  \]
			then by taking $g'$ sufficiently $C^\infty$-close to $g$ we can guarantee that for every shrinking diffeomorphism we have
			\[ \ell_g(\eta') \leq \ell_g(f \circ \eta) \leq \ell_{g'}(\eta) < \Lambda.\]
			Thus we have that $\ell_g(\eta') = T_0$ and so $f$ preserves $P_g(T_0)$ up to permutation.
			
			\item Let $\eta_1$ be a $g$-geodesic of second shortest length and let $T_1 = \ell_g(\eta_1)$. Repeating the argument above, by ensuring that $g'$ is sufficiently close to $g$ we have that every shrinking diffeomorphism preserves $P_g(T_1)$ up to permutation. In particular, intersecting the neighborhood from this step and the last step, we see that every shrinking diffeomorphism must preserve $P_g(T_1) \setminus P_g(T_0)$ up to permutation.
			
			\item Repeating step 2 until we reach $\ell_g(\gamma)$ for $\gamma \in \F$, we have that for all sufficiently close $g'$ if \hbox{$f : (M,g') \rightarrow (M,g)$} is a shrinking diffeomorphism and $\eta$ is homotopic to $f \circ \gamma$, then $\ell_g(\eta) = \ell_g(\gamma)$. Next, notice that for every shrinking diffeomorphism $f : (M,g') \rightarrow (M,g)$ we have
			\[ \ell_g(\gamma) = \ell_g(\eta) \leq \ell_g(f \circ \gamma) \leq \ell_{g'}(\gamma).\]
			By making $g'$ closer to $g$ if needed, we can ensure that $|\ell_g(f \circ \gamma) - \ell_g(\eta)|$ is small for every shrinking diffeomorphism. Using Lemma \ref{lem:lengthc0}, this forces $f \circ \gamma$ to be $C^0$ close to $\eta$. Noting that $f \circ \gamma$ has one self-intersection, we are able to ensure that $\eta$ has a self-intersection by making $g'$ even closer to $g$ if needed, thus $\eta \in \F$. \qedhere   \end{enumerate}
	\end{proof}
	
	
	We now describe the procedure for modifying the metric in the case where $\F$ has only one curve, say $\gamma$. Taking two points $p_1 \in \gamma^1$ and $p_2 \in \gamma^2$, let $U_1$ and $U_2$ be open neighborhoods of $p_1$ and $p_2$ such that Lemma ~\ref{lem:bumpnbhd} applies. Let $W^{\mathrm{i}}$ and $W^{\mathrm{o}}$ be open neighborhoods of $\gamma$ with $\zeta \coloneqq d(\partial W^{\mathrm{i}}, \partial W^{\mathrm{o}}) > 0$, and let $A$ be a closed set satisfying $\im(\gamma)  \subsetneq A \subsetneq W^{\mathrm{i}} \subsetneq W^{\mathrm{o}}$. Consider the following functions.
	\begin{itemize}
		\item For $\epsilon_1 > 0$ and $\rho_1 > 0$ such that $B_{\rho_1}(p_1) \subseteq U_1$, let $\kappa_{-\epsilon_1}$ be the bump function coming from Lemma \ref{lem:bumpnbhd}. Here, $U_1$ corresponds to $U$ in the lemma, and $B_{\rho_1}(p_1)$ corresponds to $V$ in the lemma, and we take some closed box in Fermi coordinates for the neighborhood $A$ in the lemma.
	%
		\item For $\epsilon_2 > 0$ and $\rho_2 > 0$ such that $B_{\rho_2}(p_2) \subseteq U_2$, let $\kappa_{\epsilon_2}$ be the bump function coming from Lemma ~\ref{lem:bumpnbhd}. As above, $U_2$ corresponds to $U$ in the lemma, and $B_{\rho_2}(p_2)$ corresponds to $V$ in the lemma, and we take some closed box in Fermi coordinates for the neighborhood $A$ in the lemma.
	%
		\item For $\epsilon_3 > 0$, let $\kappa_{\epsilon_3}$ be a bump function on $M$, where
		\[ (\kappa_{\epsilon_3})|_{(W^{\mathrm{i}})^c} \equiv (1+\epsilon_3)^2 \text{ and } (\kappa_{\epsilon_3})|_{A} \equiv 1.\]
	\end{itemize}
	We define a family of metrics by setting
	\[ g_{W^{\mathrm{i}}, W^{\mathrm{o}}, A, \rho_1, \rho_2, \epsilon_1, \epsilon_2, \epsilon_3} := \kappa_{\epsilon_3} \kappa_{\epsilon_2} \kappa_{-\epsilon_1} g.\]
	Notice that $g$ does not really depend on $W^{\mathrm{o}}$ beyond the fact that $W^{\mathrm{i}} \subseteq W^{\mathrm{o}}$ and $d(\partial W^{\mathrm{i}}, \partial W^{\mathrm{o}}) > 0$, but we include it for convenience, as this will become a parameter that we will vary in the future. Furthermore, we note that we will only consider those $W^{\mathrm{o}}$ which come from shrinking the widths of the boxes given in Lemma \ref{lem:c1geodesic}, and thus we may assume that $W^{\mathrm{o}}$ satisfies the bipoint uniqueness condition. This gives us a smooth family of metrics parameterized by $W^{\mathrm{i}}$, $W^{\mathrm{o}}$, $A$, $\rho_i$,  and $\epsilon_j$ which all satisfy property \ref{item1}. At this point, it is easy to adjust the constants to get property \ref{item2}, but a more careful analysis is needed to guarantee property \ref{item3}. We explore this now.

	We start by fixing some $ W^{\mathrm{i}} \subsetneq W^{\mathrm{o}}$ open and some $A$ closed satisfying the above criteria, along with some $\rho_1, \rho_2 > 0$. As noted in Remark \ref{rem:supp}, there exists uniform upper bounds $t_1 > 0$ so that for all $\epsilon_j$ satisfying $\epsilon_j < t_1$ we have that the metrics $g_{W^{\mathrm{i}}, W^{\mathrm{o}},A,\rho_1, \rho_2,\epsilon_1, \epsilon_2, \epsilon_3}$ are negatively curved. We can also choose $t_1$ sufficiently small so that we can apply Claim ~\ref{claim:isotopy} with all metrics satisfying this condition. Note that we are free to shrink the parameters $ W^{\mathrm{i}}$, $W^{\mathrm{o}}$ $A$, and $\rho_i$ as we wish, and this property will still hold. From now on, we only consider $\epsilon_j$ satisfying $\epsilon_j < t_1$.

	\begin{figure}[H] 
		\begin{center}
			\begin{tikzpicture}[scale=0.8]
				
				
				\draw[black,  domain=-3:5, samples=100] 
				plot({4*cos(\x r)/(1+sin(\x r)^2)},{(6 * sin(\x r) * cos(\x r))/(1 + sin(\x r)^2)});
				
				
				\node[label = left:{\tiny $p_1$}] at (-2,-2) [circle,fill,inner sep=1.5pt] {};
				\draw[dotted, thick, green, fill=green, opacity = 0.25] (-2,-2) circle (0.4cm);
				
				\node[label = right:{\tiny $p_2$}] at (2,-2) [circle,fill,inner sep=1.5pt]{};
				\draw[dotted, thick, green, fill=green, opacity = 0.25] (2,-2) circle (0.4cm);
				
				\draw[orange,dotted,thick] (2.25,0) circle (0.5cm);
				\begin{scope}[yscale=1,xscale=-1]
					\draw[orange,dotted,thick] (0,2.5) arc (45:315:3.6cm);
				\end{scope}
				
				\draw[orange,dotted,thick] (-2.25,0) circle (0.5cm);
				\draw[orange,dotted,thick] (0,2.5) arc (45:315:3.6cm);

				\draw[blue,dotted,thick] (2.25,0) circle (1cm);
				\begin{scope}[yscale=1,xscale=-1]
					\draw[blue,dotted,thick] (0,2) arc (45:315:3cm);
				\end{scope}
				
				\draw[blue,dotted,thick] (-2.25,0) circle (1cm);
				\draw[blue,dotted,thick] (0,2) arc (45:315:3cm);
				
				\draw[name = A1, red,thick] (2.25,0) circle (1.5cm);
				
				\begin{scope}[yscale=1,xscale=-1]
					\draw[name = A2, red,thick] (0,1.5) arc (40:320:2.5cm);
				\end{scope}

				\draw[red,thick] (-2.25,0) circle (1.5cm);
				\draw[red,thick] (0,1.5) arc (40:320:2.5cm);
			\end{tikzpicture}
			\caption{The orange dotted lines represent the boundary of the neighborhood $W^{\mathrm{o}}$, the blue dotted lines represent the boundary of the neighborhood $W^{\mathrm{i}}$, the red lines represent the boundary of the closed neighborhood $A$, and the green disks represent the sets $B_{\rho_i}(p_i)$.}\label{fig:nbhd}
		\end{center}

	\end{figure}
	
	Since we no longer need to adjust $\rho_1$, $\rho_2$, and $\epsilon_3$, we fix them once and for all. All modifications to $W^{\mathrm{i}}$ and $A$ will be done relative to $W^{\mathrm{o}}$, and so we omit these for notational convenience. Thus, we will now consider the family of metrics $g_{W^{\mathrm{o}}, \epsilon_1, \epsilon_2}$ where the parameters satisfy the properties above. 
	
	The goal is to now study how to modify the parameters so that we have $\MLS(g_{W^{\mathrm{o}}, \epsilon_1, \epsilon_2}) > (1+\epsilon)\MLS(g)$ for some $\epsilon > 0$. To start, let $\tilde{O}_{\epsilon_1} \coloneqq \supp(\kappa_{-\epsilon_1}-1)$, $\tilde{O}_{\epsilon_2} \coloneqq \supp(\kappa_{\epsilon_2}-1)$,
	\[ \xi_1 := \int_{\gamma^{-1}(\tilde{O}_{\epsilon_1})} \|\dot{\gamma}(t)\|dt - \int_{\gamma^{-1}(\tilde{O}_{\epsilon_1})} \|\dot{\gamma}(t)\|_{W^{\mathrm{o}},\epsilon_1, \epsilon_2, \epsilon_3}dt,\]
	and
	\[ \xi_2 \coloneqq \int_{\gamma^{-1}(\tilde{O}_{\epsilon_2})} \|\dot{\gamma}(t)\|_{W^{\mathrm{o}},\epsilon_1, \epsilon_2, \epsilon_3}dt -\int_{\gamma^{-1}(\tilde{O}_{\epsilon_2})} \|\dot{\gamma}(t)\|dt  .\]
	The quantities $\xi_1$ and $\xi_2$ will be used to measure the expansion and contraction of a geodesic segment relative to $\gamma$. Observe that since $W^{\mathrm{o}}$ must contain the curve $\gamma$, the quantities $\xi_1$ and $\xi_2$ are not influenced by $W^{\mathrm{o}}$. Finally, let $L \coloneqq \ell_{g_{W^{\mathrm{o}},\epsilon_1, \epsilon_2}}(\gamma)$.

	Observe that if we assume that $\epsilon_1$ is sufficiently small relative to $\epsilon_3$, then the only way a curve can possibly get shorter in the $g_{W^{\mathrm{o}}, \epsilon_1, \epsilon_2}$-metric compared to the $g$ metric is if it intersects the neighborhood $B_{\rho_1}(p_1) \cap W^{\mathrm{i}}$. We now refer to this open set as the ``shrinking neighborhood.'' Let $t_2$ be such that for $\epsilon_1 < t_2$ the above holds, and let $\mu = \epsilon_3 - \epsilon_1$. Notice that if a curve $\eta$ intersects $W^{\mathrm{i}}$ and leaves $W^{\mathrm{o}}$, then it must cross $W^{\mathrm{o}} \setminus W^{\mathrm{i}}$ and hence get longer by at least $\zeta \mu$, where again $\zeta = d_g(\partial W^{\mathrm{i}}, \partial W^{\mathrm{o}})$. This leads us to our first observation, which is an additive bound on the marked length spectrum.

\begin{claim} \label{claim:MLS0}
	Assume that $\epsilon_1, \epsilon_2 < t_2$. There is $t_3 > 0$ and an open set $W_1 \supseteq \text{Im}(\gamma)$ such that if $W^{\mathrm{o}} \subseteq W_1$, $\epsilon_1 < t_3$, and $\xi_1 + s < \xi_2$ for some $s$ depending on $W^{\mathrm{o}}$, then there is a uniform $\lambda > 0$ so that $\MLS(g_{W^{\mathrm{o}}, \epsilon_1, \epsilon_2}) > \MLS(g) + \lambda$.
\end{claim}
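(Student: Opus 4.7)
The plan is to reduce the claim to a uniform lower bound on $\ell_{g'}(\eta^*) - \ell_g(\eta^*)$ over all closed $g'$-geodesics $\eta^*$, where $g' := g_{W^{\mathrm{o}}, \epsilon_1, \epsilon_2}$. Since the unique $g$-geodesic $\eta$ in the same free homotopy class minimizes $g$-length, $\ell_g(\eta) \leq \ell_g(\eta^*)$, and such a pointwise bound would yield $\MLS(g') \geq \MLS(g) + \lambda$ pointwise.

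First I would fix $W_1 \supseteq \text{Im}(\gamma)$ small enough, combining Lemma \ref{lem:c1geodesic} with the uniqueness of closed geodesics in free homotopy classes and a perturbative (structural stability) argument, so that for any $W^{\mathrm{o}} \subseteq W_1$ and any admissible $g'$, every closed $g'$-geodesic whose image lies in $W^{\mathrm{o}}$ must be a reparameterized iterate $\gamma^n$ of $\gamma$ for some $n \geq 1$. Then I would split into two cases according to whether $\text{Im}(\eta^*) \subseteq W^{\mathrm{o}}$.

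In Case 1 ($\text{Im}(\eta^*) \subseteq W^{\mathrm{o}}$), the above dichotomy gives $\eta^* = \gamma^n$, so property \eqref{item2} yields $\ell_{g'}(\eta^*) - \ell_g(\eta^*) = n(\xi_2 - \xi_1) \geq \xi_2 - \xi_1 > s$. In Case 2 ($\text{Im}(\eta^*) \not\subseteq W^{\mathrm{o}}$), let $N \geq 1$ be the number of maximal subarcs of $\eta^*$ lying outside $W^{\mathrm{i}}$. Each such subarc begins and ends by crossing the annular region $W^{\mathrm{o}} \setminus W^{\mathrm{i}}$, contributing at least $2\zeta$ of $g$-length to $\eta^* \cap (W^{\mathrm{i}})^c$; since $g' = (1+\epsilon_3)^2 g$ there, the resulting length gain is at least $2N\epsilon_3 \zeta$. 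By the hypothesis $\epsilon_1 < t_2$, the only region where $g' < g$ is the shrinking neighborhood $\tilde{O}_{\epsilon_1} \subseteq B_{\rho_1}(p_1)$, where $g' \geq (1-\epsilon_1)^2 g$. Using bipoint uniqueness on $B_{\rho_1}(p_1)$, each maximal subarc of $\eta^*$ inside $W^{\mathrm{i}}$ passes through $\tilde{O}_{\epsilon_1}$ at most once, so the total loss is bounded by $N C_0 \xi_1$ for a geometric constant $C_0$ comparable to $1$. Hence
\[ \ell_{g'}(\eta^*) - \ell_g(\eta^*) \geq N(2\epsilon_3 \zeta - C_0 \xi_1), \]
which is positive once $\xi_1 < 2\epsilon_3 \zeta / C_0$; this constraint determines the threshold $t_3$.

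Setting $\lambda := \min(s, 2\epsilon_3 \zeta - C_0 \xi_1)$ completes the argument. The main obstacle will be in Case 2: precisely pinning down the geometric constant $C_0$ uniformly and confirming that bipoint uniqueness on $B_{\rho_1}(p_1)$ persists across the admissible family of metrics. The Case 1 dichotomy, while standard, also needs a perturbative closed-orbit argument, which should follow from $C^\infty$-closeness of $g'$ to $g$ together with the fact that $\gamma$ remains a $g'$-geodesic after reparameterization by Lemma \ref{lem:bumpnbhd}.
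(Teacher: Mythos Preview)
Your Case 2 contains a genuine gap. The assertion that ``each maximal subarc of $\eta^*$ inside $W^{\mathrm{i}}$ passes through $\tilde{O}_{\epsilon_1}$ at most once'' is false, and bipoint uniqueness on $B_{\rho_1}(p_1)$ does not imply it. Bipoint uniqueness only prevents a geodesic arc from looping back \emph{within} the ball; it says nothing about an arc that exits $B_{\rho_1}(p_1)$, follows $\gamma$ once around (remaining in $W^{\mathrm{i}}$ the whole time), and then re-enters $B_{\rho_1}(p_1)$. Since $\gamma$ is a closed $g'$-geodesic, a $g'$-geodesic can shadow it for an arbitrarily long time, so a single maximal subarc of $\eta^*$ inside $W^{\mathrm{i}}$ can wind around $\gamma$ many times and hit the shrinking neighborhood on each lap. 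Consequently the loss is not bounded by $N C_0 \xi_1$, and your inequality $\ell_{g'}(\eta^*) - \ell_g(\eta^*) \geq N(2\epsilon_3 \zeta - C_0 \xi_1)$ fails.

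What you are missing is precisely the role of the \emph{expanding neighborhood} $\tilde{O}_{\epsilon_2}$ in Case 2; in your outline it never appears there. The paper's mechanism is that, once $W^{\mathrm{o}}$ is small enough, Lemma~\ref{lem:c1geodesic} forces any $g'$-geodesic segment remaining in $W^{\mathrm{o}}$ to be $C^1$-close to $\gamma$; hence each pass through the shrinking neighborhood is followed either by a pass through the expanding neighborhood (net gain $\xi_2 - \xi_1 - s$) or by an exit from $W^{\mathrm{o}}$ (net gain $2\zeta\mu - (\xi_1 + \delta_1)$). This compensation per visit, not a bound on the number of visits, is what yields the uniform $\lambda$. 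Your Case 1 dichotomy ($\eta^* = \gamma^n$) is correct and is a clean alternative for that case, but Case 2 needs the compensation argument above rather than a counting bound.
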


\begin{proof}
	Let $\eta$ be a closed $g_{W^{\mathrm{o}}, \epsilon_1, \epsilon_2}$-geodesic. If $\eta$ does not intersect $W^{\mathrm{i}}$, then it is clear that it gets longer by some factor related to its length and $\mu$. Taking the minimum over the lengths of all closed geodesics gives us a uniform lower bound $\lambda_1$ so that 
	\begin{equation} \label{eqn:abound1} \ell_{g_{W^{\mathrm{o}}, \epsilon_1, \epsilon_2}}(\eta) > \ell_g(\eta) + \lambda_1. \end{equation}
	
	Next, suppose that $\text{Im}(\eta)$ intersects $W^{\mathrm{i}}$. If the curve is not entirely contained within $W^{\mathrm{o}}$, then it must get longer by at least $2 \zeta \mu$, as it has to leave and re-enter $W^{\mathrm{o}}$. If it does not intersect the shrinking neighborhood, then this gives us the lower bound
	\begin{equation} \label{eqn:abound2} \ell_{g_{W^{\mathrm{o}}, \epsilon_1, \epsilon_2}}(\eta) > \ell_g(\eta) + 2 \zeta \mu. \end{equation}
	Finally, suppose that $\text{Im}(\eta)$ intersects the shrinking neighborhood $B_{\rho_1}(p_1) \cap W^{\mathrm{o}}$. Notice that if $\eta$ intersects the shrinking neighborhood, then we have that the curve must get shorter by at least $\xi_1 + \delta_1$, where $\delta_1$ is some uniform constant depending on $W^{\mathrm{o}}$ that tends to zero as $W^{\mathrm{o}}$ gets smaller. The parameter $\delta_1$ arises from the fact that we do not have any estimates on how long $\eta$ spends in the shrinking neighborhood -- it could possibly spend more time than $\gamma$, and hence shrink more than $\gamma$ does. By compactness of the unit tangent bundle, there is a curve that spends the most amount of time in this neighborhood, and so this $\delta_1$ is realized through this curve. Furthermore, the amount of time that it spends in the shrinking neighborhood is clearly controlled by $W^{\mathrm{o}}$, and hence making $W^{\mathrm{o}}$ smaller will make $\delta_1$ smaller.
	
	After adjusting $W^{\mathrm{o}}$ and using Lemma \ref{lem:c1geodesic}, we see that once $\eta$ intersects the shrinking neighborhood, there are two options: either $\eta$ must stay $C^1$-close to $\gamma$ until it crosses $B_{\rho_2}(p_2) \cap W^{\mathrm{o}}$, and hence $\eta$ will get longer by at least $\xi_2 - \delta_2$, where $\delta_2$ depends on $W^{\mathrm{o}}$ and tends to zero as $W^{\mathrm{o}}$ gets smaller, or $\eta$ must leave and re-enter $W^{\mathrm{o}}$, and hence $\eta$ will get longer by at least $2\zeta \mu$. In either case, if $\eta$ crosses the shrinking neighborhood $m$ times with $m \geq 1$, then setting $s \coloneqq \delta_1 + \delta_2$ we have the bound
	\begin{equation} \label{eqn:abound3} \ell_{g_{W^{\mathrm{o}}, \epsilon_1, \epsilon_2}}(\eta) > \ell_g(\eta) + m \cdot \min\{(\xi_2 - \xi_1 - s), 2\zeta \mu - (\xi_1 + \delta_1)\}. \end{equation}
	Note that this bound also accounts for the case where $\eta$ is entirely contained within $W^{\mathrm{i}}$, and this exhausts the last possibility for $\eta$. Ensuring $\epsilon_1$ and $W^{\mathrm{o}}$ are small enough, we have that the bounds in \eqref{eqn:abound1}, \eqref{eqn:abound2}, and \eqref{eqn:abound3} are all positive. Letting $\lambda$ be the minimum of these bounds yields the desired result.
\end{proof}

\begin{rem}
If we replace the multiplicative bound in Theorem \ref{thm:perturb} with an additive bound, then we now have the ingredients to prove this modified version of the theorem via the argument outlined in Section \ref{sec:outline}.
\end{rem}

The previous argument suggests that a multiplicative bound is possible, and we make this rigorous with the following.

\begin{claim} \label{claim:MLS1}
		Assume that $\epsilon_1, \epsilon_2 < t_3$ and $W^{\mathrm{o}} \subseteq W_1$. 
		There is a $t_4 > 0$ and an open set $W_2$ with $W_1 \supseteq W_2 \supseteq \text{Im}(\gamma)$ such that if $W^{\mathrm{o}} \subseteq W_2$, $\epsilon_1 < t_4$, and $\xi_1 + s'' < \xi_2$ for some $s$ depending on $W^{\mathrm{o}}$, then the following hold.
		\begin{enumerate}[(a)]
			\item \label{item:claim1} There is a uniform $\epsilon'' > 0$ so that if $\eta$ is a closed $g_{ W^{\mathrm{o}},  \epsilon_1, \epsilon_2}$-geodesic with $\ell_{g_{W^{\mathrm{o}},\epsilon_1, \epsilon_2}}(\eta) \geq N L$ and $N \geq 1$, then
			\[ 
			\ell_{g_{ W^{\mathrm{o}},  \epsilon_1, \epsilon_2}}\left(\eta |_{[0,NL]} \right) > \ell_g\left(\eta |_{[0,NL]} \right) + N\epsilon''. 
			\]
			\item \label{item:claim2} There is a uniform $\epsilon' > 0$ and an $N_0 > 0$ so that for $N \geq N_0$, if $\eta$ is a closed $g_{ W^{\mathrm{o}},  \epsilon_1, \epsilon_2}$-geodesic satisfying
			\[ NL \leq \ell_{g_{ W^{\mathrm{o}},  \epsilon_1, \epsilon_2}}(\eta) < (N+1)L,\]
			then
			\[ \ell_{g_{ W^{\mathrm{o}},  \epsilon_1, \epsilon_2}}(\eta) > \ell_{g}(\eta) + N \epsilon'.\]
		\end{enumerate} 
	\end{claim}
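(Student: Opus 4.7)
My proof plan is to establish part (a) first and then derive part (b) by controlling a short tail using a pointwise comparison between $g$ and $g_{W^{\mathrm{o}},\epsilon_1,\epsilon_2}$. The central idea for part (a) is to partition $\eta|_{[0,NL]}$ into $N$ consecutive arcs $\alpha_1,\ldots,\alpha_N$ of $g_{W^{\mathrm{o}},\epsilon_1,\epsilon_2}$-length $L$ and prove a uniform per-arc estimate
\[
\ell_{g_{W^{\mathrm{o}},\epsilon_1,\epsilon_2}}(\alpha_i) - \ell_g(\alpha_i) \geq \epsilon''
\]
with $\epsilon''>0$ independent of $\eta$ and $N$; summing over $i$ then yields part (a). To obtain this per-arc bound, I would run the case analysis of Claim \ref{claim:MLS0} on each arc of length $L$ rather than on the whole geodesic. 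First, if $\alpha_i$ does not meet $W^{\mathrm{i}}$, then on $\alpha_i$ the metric is $(1+\epsilon_3)^2 g$ and the length difference equals $L\epsilon_3/(1+\epsilon_3)$. Second, if $\alpha_i$ meets $W^{\mathrm{i}}$ but is not contained in $W^{\mathrm{o}}$, then $\alpha_i$ must cross the annulus $W^{\mathrm{o}}\setminus W^{\mathrm{i}}$ on which the metric is $(1+\epsilon_3)^2 g$, contributing on the order of $\zeta\mu$ exactly as in Claim \ref{claim:MLS0}. Third (the crucial case), if $\alpha_i \subseteq W^{\mathrm{o}}$ and $\alpha_i$ meets $W^{\mathrm{i}}$, then after shrinking $W^{\mathrm{o}} \subseteq W_2$ so that $W^{\mathrm{o}}$ lies inside the $C^1$-neighborhood from Lemma \ref{lem:c1geodesic} applied to $\gamma$ as a $g_{W^{\mathrm{o}},\epsilon_1,\epsilon_2}$-geodesic of length $L$, the arc $\alpha_i$ is forced to be $C^1$-close to a reparameterization of $\gamma$. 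Since $\alpha_i$ has the same $g_{W^{\mathrm{o}},\epsilon_1,\epsilon_2}$-length as $\gamma$, it traces $\gamma$ once and crosses both $\tilde{O}_{\epsilon_1}$ and $\tilde{O}_{\epsilon_2}$; the analysis of Claim \ref{claim:MLS0} then gives a length difference of at least $\xi_2-\xi_1-s''$, where $s''$ absorbs deviation terms of the form $\delta_1+\delta_2$ and can be made strictly less than $\xi_2-\xi_1$ by further shrinking $W^{\mathrm{o}}$. Taking $\epsilon''$ to be the minimum of the three resulting positive quantities completes part (a).

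For part (b), I would write $\eta=\eta_0\cdot\eta_1$ with $\eta_0=\eta|_{[0,NL]}$ and $\eta_1$ the tail of $g_{W^{\mathrm{o}},\epsilon_1,\epsilon_2}$-length strictly less than $L$. Part (a) applied to $\eta_0$ contributes $N\epsilon''$ to the length difference. For $\eta_1$, the pointwise bound $g \leq (1-\epsilon_1)^{-2} g_{W^{\mathrm{o}},\epsilon_1,\epsilon_2}$ (with the worst ratio realized in the shrinking neighborhood) gives $\ell_g(\eta_1) < L/(1-\epsilon_1)$, so the tail accounts for a loss of at most $L\epsilon_1/(1-\epsilon_1)$. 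Combining the two contributions yields
\[
\ell_{g_{W^{\mathrm{o}},\epsilon_1,\epsilon_2}}(\eta) - \ell_g(\eta) \geq N\epsilon'' - \frac{L\epsilon_1}{1-\epsilon_1},
\]
and part (b) follows for $N \geq N_0 := \lceil 2L\epsilon_1/((1-\epsilon_1)\epsilon'')\rceil$ with $\epsilon' := \epsilon''/2$.

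The main obstacle is the third case of the per-arc analysis. We must rule out the possibility that a $g_{W^{\mathrm{o}},\epsilon_1,\epsilon_2}$-geodesic arc of length $L$ confined to a tubular neighborhood of the figure eight $\gamma$ spends its length iterating a shorter sub-loop like $\gamma^1$ or $\gamma^2$ near the self-intersection point, without ever crossing both of the special neighborhoods. Lemma \ref{lem:c1geodesic}, applied to $\gamma$ as a $g_{W^{\mathrm{o}},\epsilon_1,\epsilon_2}$-geodesic of length $L$, rules this out provided $W^{\mathrm{o}}$ is shrunk to a sufficiently thin tubular neighborhood $W_2$, since then any $g_{W^{\mathrm{o}},\epsilon_1,\epsilon_2}$-geodesic segment of length $L$ contained in $W^{\mathrm{o}}$ must be $C^1$-close to (a reparameterization of) $\gamma$ itself. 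A secondary technical point is to choose the parameters $t_4$, $W_2$, and $s''$ in the correct order so that the three contributions bounding cases (i)--(iii) remain uniformly positive throughout the argument.
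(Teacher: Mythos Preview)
Your strategy is the same as the paper's: cut $\eta|_{[0,NL]}$ into $N$ arcs of $g_{W^{\mathrm{o}},\epsilon_1,\epsilon_2}$-length $L$, prove a uniform per-arc gain $\epsilon''$, then absorb the sub-$L$ tail for part (b). Part (b) in particular matches the paper's argument essentially line for line.

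The gap is in your per-arc case analysis, specifically case (ii). You write that if $\alpha_i$ meets $W^{\mathrm{i}}$ but is not contained in $W^{\mathrm{o}}$ then crossing the annulus contributes on the order of $\zeta\mu$ ``exactly as in Claim \ref{claim:MLS0}.'' That appeal does not transfer directly. In Claim \ref{claim:MLS0} the curve is closed, so every exit from $W^{\mathrm{o}}$ is paired with a re-entry and one collects $2\zeta\mu$ per excursion; for an arc of length $L$ an exit need not be followed by a re-entry, so a single excursion only yields $\zeta\mu$. More importantly, in this case $\alpha_i$ can still pass through the shrinking neighborhood---possibly several times, with an excursion out of $W^{\mathrm{o}}$ between consecutive visits (forced by bipoint uniqueness)---and can also begin and/or end inside the shrinking neighborhood, picking up an extra loss of up to $\xi_1+\delta_1$ that is not paired with any compensating expansion or annulus crossing. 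Your case (ii) bound of order $\zeta\mu$ ignores all of this and can therefore be negative. The paper handles exactly these scenarios by a finer subdivision (their bounds (4), (5), and (6)), distinguishing whether $\alpha_i$ starts/ends in the shrinking neighborhood and counting the $m$ visits separately; the resulting lower bounds are of the form $m[2\zeta\mu-(\xi_1+\delta_1)]-(\xi_1+\delta_1)$ or $(m-1)[2\zeta\mu-(\xi_1+\delta_1)]+\zeta\mu-(\xi_1+\delta_1)$, which are then made positive by shrinking $\epsilon_1$ and $W^{\mathrm{o}}$. Your case (iii) has a parallel omission: when $\alpha_i\subseteq W^{\mathrm{o}}$ and both endpoints lie in the shrinking neighborhood, the arc meets $\tilde O_{\epsilon_1}$ twice but $\tilde O_{\epsilon_2}$ only once, so the deviation term is a new $s'$ rather than the $s=\delta_1+\delta_2$ of Claim \ref{claim:MLS0}; this is why the paper sets $s''=\max\{s,s'\}$.

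None of this changes the architecture of your proof; it only means that the trichotomy you wrote is too coarse and must be refined into the paper's endpoint-and-multiplicity case split before the per-arc constant $\epsilon''$ can be declared positive.
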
 
	
	\begin{proof}
		We first prove the case $N=1$ for \eqref{item:claim1}. Note that the argument is similar to that of Claim \ref{claim:MLS0}. Indeed, replacing $\eta$ with $\eta|_{[0,L]}$ and $\lambda_1$ with $\mu L$, the bounds in \eqref{eqn:abound1} and \eqref{eqn:abound2} follow. We note that, after adjusting $W^{\mathrm{o}}$ and using Lemma \ref{lem:c1geodesic}, we only need to consider what happens $\eta|_{[0,L]}$ intersects a shrinking neighborhood. Some care needs to be taken here as opposed to Claim \ref{claim:MLS0}, since $\eta|_{[0,L]}$ can intersect the shrinking neighborhood an additional time in this scenario. However, as we will see, additional intersections of the shrinking neighborhood can only occur if the curve $\eta|_{[0,L]}$ leaves $W^{\mathrm{o}}$ inbetween, and this allows for us to let $\xi_2$ be as close to $\xi_1$ as we wish. 
		
		We break up the argument into cases assuming that $\eta|_{[0,L]}$ intersects the shrinking neighborhood.
		\begin{enumerate}[$\text{Case}$ 1:]
			\item If $\eta|_{[0,L]}$ does not start nor end in the shrinking neighborhood, then either it is completely contained in $W^{\mathrm{o}}$ or it leaves $W^{\mathrm{o}}$. We consider these separately.
			\begin{enumerate}[$\text{Subcase}$ 1:]
				\item If $\eta|_{[0,L]}$ stays entirely in $W^{\mathrm{o}}$, then, after appropriately adjusting $W^{\mathrm{o}}$, it must be $C^1$-close to $\gamma$, and hence it must cross $W^{\mathrm{o}} \cap B_{\rho_2}(p_2)$. In this case, we see that the bound in \eqref{eqn:abound3} applies with $m = 1$.
				
				\item If $\eta|_{[0,L]}$ does not stay entirely in $W^{\mathrm{o}}$, then it is possible that $\eta|_{[0,L]}$ intersects the shrinking neighborhood more than once. In particular, we see that it must leave $W^{\mathrm{o}}$ inbetween each intersection of the shrinking neighborhood, as $\eta|_{[0,L]}$ is not allowed to ``turn around'' inside of $W^{\mathrm{o}}$ due to the bipoint uniqueness condition. We also note that $\eta|_{[0,L]}$ could intersect the shrinking neighborhood one last time before reaching its endpoint. Assuming that it intersects the shrinking neighborhood $m$ times, with $m \geq 1$, we overcompensate for this additional intersection and consider the lower bound
				\begin{equation} \label{eqn:abound4} \ell_{g_{W^{\mathrm{o}}, \epsilon_1, \epsilon_2}}(\eta|_{[0,L]}) > \ell_g(\eta|_{[0,L]}) + m \cdot [2\zeta \mu - (\xi_1 + \delta_1)] - (\xi_1 + \delta_1). \end{equation}
				Finally, it is also possible that, after intersecting the shrinking neighborhood $m$ times, $\eta|_{[0,L]}$ leaves $W^{\mathrm{o}}$ and does not return. This yields the lower bound 
				\begin{equation} \label{eqn:abound5} \ell_{g_{W^{\mathrm{o}}, \epsilon_1, \epsilon_2}}(\eta|_{[0,L]}) > \ell_g(\eta|_{[0,L]}) + (m-1) \cdot [2\zeta \mu - (\xi_1 + \delta_1)] + \zeta \mu - (\xi_1 + \delta_1) . \end{equation}
				Note that this bound also accounts for $\eta|_{[0,L]}$ starting outside of $W^{\mathrm{o}}$ and entering the set in order to intersect with the shrinking neighborhood.
			\end{enumerate}
			
			\item If $\eta|_{[0,L]}$ either starts or ends in the shrinking neighborhood (but not both), then the same sort of arguments as in the previous case apply, and we omit them.
			
			\item If $\eta|_{[0,L]}$ starts and ends in the shrinking neighborhood, then we see that either $\eta_{[0,L]}$ stays entirely in $W^{\mathrm{o}}$ or it leaves. We list these subcases separately.
			\begin{enumerate}[$\text{Subcase}$ 1:]
				\item 	If $\eta|_{[0,L]}$ stays entirely in $W^{\mathrm{o}}$, then Lemma \ref{lem:c1geodesic}, implies that we have $\eta|_{[0,L]}$ is $C^1$-close to $\gamma$, hence it must cross the expanding neighborhood as well. Using compactness of the unit tangent bundle again, we see that there is an $s'$ depending on $W^{\mathrm{o}}$ so that 
				\begin{equation} \label{eqn:abound6} \ell_{g_{W^{\mathrm{o}}, \epsilon_1, \epsilon_2}}(\eta|_{[0,L]}) > \ell_g(\eta|_{[0,L]}) + (\xi_2 - \xi_1 - s'). \end{equation}
				Note that this $s'$ does not necessarily agree with the $s$ from Claim \ref{claim:MLS0}, as it arises from considering the two intersections with the shrinking neighborhood. Still, this $s'$ depends on $W^{\mathrm{o}}$, and it tends to zero as $W^{\mathrm{o}}$ gets smaller.
				
				\item  If $\eta|_{[0,L]}$ does not stay entirely in $W^{\mathrm{o}}$, then the same argument in Subcase 2 of Case 1 applies, and thus we have the same bounds.
			\end{enumerate}
		\end{enumerate}

		Adjusting $\epsilon_1$ and $W^{\mathrm{o}}$ appropriately, we have that the bounds in \eqref{eqn:abound1}, \eqref{eqn:abound2}, \eqref{eqn:abound3}, \eqref{eqn:abound4}, \eqref{eqn:abound5}, and \eqref{eqn:abound6} are all positive. We set $\epsilon''$ to be the minimum of the quantities on the right, and we set $s''$ to be the maximum between $s$ and $s'$.
		For $N > 1$, we can apply the argument above on each segment $\eta|_{[(N-1)L, NL]}$, here using the fact that the constants $\epsilon''$ and $s''$ are independent from the choice of geodesic. This finishes the proof of \eqref{item:claim1}.

		We now prove \eqref{item:claim2}. Suppose that $NL \leq \ell_{g_{W^{\mathrm{o}}, \epsilon_1, \epsilon_2}}(\eta) < (N+1)L$ for some $N \geq 1$. Let $S \coloneqq \ell_{g_{W^{\mathrm{o}},\epsilon_1, \epsilon_2}}(\eta)$. By \eqref{item:claim1}, we have that $\ell_{g_{W^{\mathrm{o}}, \epsilon_1, \epsilon_2}}(\eta|_{[0,NL]}) > \ell_g(\eta|_{[0,NL]}) + N \epsilon''$. Notice that, at worst, we have that $\eta|_{[NL, S]}$ remains entirely in the shrinking neighborhood, so there is some $\theta = \theta(L, \epsilon_1, W^{\mathrm{o}})$ so that \hbox{$\ell_{g_{W^{\mathrm{o}}, \epsilon_1, \epsilon_2}}(\eta) > \ell_g(\eta) + N \epsilon'' - \theta$}. Note that $\theta$ tends to $0$ as $\epsilon_1$ tends to zero and is independent of $N$, so in particular we can take a uniform lower bound $N\epsilon'' - \theta_0$ which applies for all $\epsilon_1 < t_4$ and all $W^{\mathrm{o}} \subseteq W_2$. Since $\epsilon''$ and $\theta_0$ are now fixed, take $N_0 > \theta_0/\epsilon''$ and let $\epsilon' \coloneqq 1 - N_0 \epsilon'' / (N_0+1)$. Since $N\epsilon' \leq (N - N_0) \epsilon''$ by construction, we have that
		\[\begin{split} \ell_{g_{W^{\mathrm{o}}, \epsilon_1, \epsilon_2}}(\eta) > \ell_g(\eta) + N \epsilon'' - \theta_0 & > \ell_g(\eta) + (N-N_0) \epsilon'' + N_0 \epsilon'' - \theta_0 \\ 
		& > \ell_g(\eta) +  N \epsilon',\end{split}\]
		as desired. \qedhere
		
		
	\end{proof}
		
	We now have the ingredients to prove the multiplicative bound on the marked length spectrum.
	
	\begin{claim} \label{claim:MLS}
		Assume that $\epsilon_1, \epsilon_2 < t_4$ and $W^{\mathrm{o}} \subseteq W_4$. If $\xi_2 > \xi_1 + s$ for some $s$ depending on $W^{\mathrm{o}}$, then there is an $\epsilon > 0$ so that
		\[ \MLS(g_{W^{\mathrm{o}}, \epsilon_1, \epsilon_2} ) > (1+\epsilon)\MLS(g). \]
	\end{claim}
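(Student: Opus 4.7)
Write $g' \coloneqq g_{W^{\mathrm{o}}, \epsilon_1, \epsilon_2}$ for brevity. The strategy is to split free homotopy classes $\sigma$ into a long regime and a short regime according to the $g'$-length of the $g'$-geodesic representative $\eta_\sigma$: in the long regime I will invoke Claim \ref{claim:MLS1}(b) to convert the additive gain $N\epsilon'$ into a multiplicative one, and in the short regime I will invoke Claim \ref{claim:MLS0} together with the finiteness of bounded-length closed geodesics in negative curvature to convert the uniform additive gain $\lambda$ into a multiplicative one. Throughout I will use the inequality $\ell_g(\eta_\sigma) \geq \MLS(g)(\sigma)$, which holds because the $g$-geodesic representing $\sigma$ is length-minimizing in its free homotopy class with respect to $g$.

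In the long regime $\ell_{g'}(\eta_\sigma) \geq N_0 L$, write $\ell_{g'}(\eta_\sigma) \in [NL, (N+1)L)$ for some integer $N \geq N_0$. Claim \ref{claim:MLS1}(b) then gives $\ell_{g'}(\eta_\sigma) > \ell_g(\eta_\sigma) + N\epsilon'$; in particular $\ell_g(\eta_\sigma) < \ell_{g'}(\eta_\sigma) < (N+1)L$, so $N > \ell_g(\eta_\sigma)/L - 1$. Substituting yields
$$\MLS(g')(\sigma) > \left(1 + \frac{\epsilon'}{L}\right)\ell_g(\eta_\sigma) - \epsilon' \geq \left(1 + \frac{\epsilon'}{L}\right)\MLS(g)(\sigma) - \epsilon'.$$
For classes with $\MLS(g)(\sigma) \geq 2L$ this rearranges to $\MLS(g')(\sigma) > (1 + \epsilon'/(2L))\MLS(g)(\sigma)$. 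For classes with $\MLS(g)(\sigma) < 2L$ that are still in the long regime, I will instead use the cruder bound $\ell_{g'}(\eta_\sigma) > \MLS(g)(\sigma) + N_0\epsilon'$, which yields the even stronger multiplicative factor $1 + N_0\epsilon'/(2L)$.

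In the short regime $\ell_{g'}(\eta_\sigma) < N_0 L$, negative curvature of $g'$ ensures that only finitely many such $\sigma$ occur, and the uniform comparability of $g$ and $g'$ gives an upper bound $T_0 < \infty$ on the values $\MLS(g)(\sigma) \leq \ell_g(\eta_\sigma)$ over this finite set. Claim \ref{claim:MLS0} then supplies a uniform $\lambda > 0$ with $\MLS(g')(\sigma) > \MLS(g)(\sigma) + \lambda$, giving $\MLS(g')(\sigma)/\MLS(g)(\sigma) > 1 + \lambda/T_0$. Setting $\epsilon$ to be the minimum of the multiplicative constants obtained in the two regimes completes the proof. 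The main technical subtlety is not any single estimate but combining the two regimes uniformly across infinitely many homotopy classes, which is exactly what the long/short split accomplishes.
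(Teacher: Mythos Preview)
Your proof is correct and follows essentially the same route as the paper: split closed $g'$-geodesics according to whether their $g'$-length lies below or above $N_0 L$, apply Claim~\ref{claim:MLS0} in the short regime and Claim~\ref{claim:MLS1}\eqref{item:claim2} in the long regime, and convert the additive gains into multiplicative ones using the bound $\ell_g(\eta_\sigma) < (N+1)L$ in the long case and a uniform upper bound on $\ell_g(\eta_\sigma)$ in the short case. Two cosmetic simplifications are available: in the long regime the paper handles all $N\ge N_0$ in one stroke (since $N\epsilon' > 2NL\epsilon \ge (N+1)L\epsilon > \epsilon\,\ell_g(\eta_\sigma)$ once $\epsilon < \epsilon'/(2L)$), so your separate treatment of the sub-case $\MLS(g)(\sigma) < 2L$ is not needed; and in the short regime the finiteness of closed geodesics is a detour, since $\ell_g(\eta_\sigma) < \ell_{g'}(\eta_\sigma) < N_0 L$ already gives $T_0 = N_0 L$ directly.
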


	\begin{proof}
		Let $\epsilon'$, $\lambda$ $N_0$,and $L$ be from Claims \ref{claim:MLS0} and \ref{claim:MLS1} \eqref{item:claim2}. Let $\epsilon > 0$ be such that $\epsilon' > 2L \epsilon$ and $\lambda > N_0 L \epsilon$.
		Let $\eta$ be a closed $g_{W^{\mathrm{o}},\epsilon_1, \epsilon_2}$-geodesic. If $\ell_{g_{W^{\mathrm{o}}, \epsilon_1, \epsilon_2}}(\eta) \in [NL, (N+1)L)$ for some $N \geq N_0$, then we have 
		\[ 
		\begin{split}\ell_{g_{W^{\mathrm{o}}, \epsilon_1, \epsilon_2}}(\eta) > N \epsilon' + \ell_g(\eta) &> N L\left(\frac{N+1}{N}\right) \epsilon  + \ell_g(\eta) \\
			& > (1+\epsilon)\ell_g(\eta).\end{split}\]
		Next, notice that the argument in Claim \ref{claim:MLS0} implies that $N_0 L \geq \ell_{g_{W^{\mathrm{o}}, \epsilon_1, \epsilon_2}}(\eta) > \ell_g(\eta) + \lambda$. Thus, if we have $\ell_{g_{W^{\mathrm{o}}, \epsilon_1, \epsilon_2}}(\eta) \leq N_0 L$, then 
		\[\frac{\ell_{g_{W^{\mathrm{o}}, \epsilon_1, \epsilon_2}}(\eta)}{\ell_g(\eta)} > 1 + \frac{\lambda}{\ell_g(\eta)} > 1 + \frac{\lambda}{N_0L} > 1 + \epsilon. \]
		The result follows.
	\end{proof}
	
	\begin{rem}
		Fixing $\epsilon_1$ above, we can let $\xi_2$ be arbitrarily close to $\xi_1$ by adjusting $W^{\mathrm{o}}$. Recall that this is necessary for the proof outlined in Sections \ref{sec:introduction} and \ref{sec:outline}.
	\end{rem}
	
	As described in Section \ref{sec:outline}, this gives us the ingredients to prove the theorem provided there is only one curve in $\F$. If there is more than one curve in $\F$, then we apply the same construction in a neighborhood of each $\gamma \in \F$. If the curves do not intersect, the results and arguments are almost the same provided we choose the parameters for the curves so that $\xi_1$ and $\xi_2$ are uniform among all curves. Note that $\epsilon_3$ will be chosen uniformly for all curves, and the neighborhoods $A$, $W^{\mathrm{i}}$ and $W^{\mathrm{o}}$ will be the union of all of the corresponding neighborhoods for each curve. The only adjustment that needs to be made in the case of intersecting curves is that the shrinking and expanding neighborhoods must be chosen so that they do not overlap with any shrinking and expanding neighborhood, and are away from the points of intersection. These adjustments, along with the argument in Section \ref{sec:outline}, prove Theorem \ref{thm:perturb}.
	
	\bibliographystyle{plain}
	\bibliography{bibliography.bib}

\end{document}